\documentclass[a4paper]{article}
\usepackage{amsmath}
\usepackage{mathrsfs}
\usepackage{amsfonts}%%Allow to use some math fonts%%
\usepackage{graphicx}
\usepackage{amsmath}
\usepackage{amssymb}
\usepackage[all]{xypic}
\usepackage[all]{xy}
\usepackage[toc,page,title,titletoc,header]{appendix}
\usepackage{xcolor}
\usepackage{hyperref}%%About hyerreference, also allow to link to bibliography%%
\usepackage[square,numbers]{natbib}%%About the bibliography%%
\usepackage{fancyhdr}%%Allow to make head and foot fancy%%
\usepackage{amsthm}
\usepackage[a4paper, margin=3cm]{geometry}

%%%%%%%%%%%%%%%%%%%%%%%%%%%%%%%%%%%%%%%%%%%%%%%%%%%%%%%%%%%%%%%%%%%%
%%%%%%%%%%%%%%%%%%%%%%%%%%%%%%%%%%%%%%%%%%%%%%%%%%%%%%%%%%%%%%%%%%%%

\newcommand{\tild}{{\tilde d}}
\newcommand{\tils}{{\tilde s}}

\newcommand{\tilD}{{\widetilde D}}

\newcommand{\tilGamma}{{\widetilde{\Gamma}}}
\newcommand{\tilK}{{\widetilde K}}

\newcommand{\tilP}{{\widetilde{P}}}
\newcommand{\tilS}{{\widetilde S}}

\newcommand{\tilX}{{\widetilde{X}}}

%%general setting

\renewcommand{\P}{{\mathbb P}}

\newcommand{\Q}{{\mathbb Q}}
\newcommand{\Z}{{\mathbb Z}}
\newcommand{\N}{{\mathbb N}}

\newcommand{\SL}{\mathrm{SL}}

\newcommand{\Char}{\mathrm{Char}}

%%algebraic geometry

\newcommand{\Spec}{\mathrm{Spec}}

%%number theory

\newcommand{\NN}{\mathcal{N}}
\newcommand{\OO}{\mathcal{O}}
\newcommand{\gerp}{{\mathfrak{p}}}
\newcommand{\gerP}{{\mathfrak{P}}}
\newcommand{\height}{\mathrm{h}}

%%representation theory

\newtheorem{theorem}{THEOREM}[section]

\newtheorem{corollary}[theorem]{COROLLARY}

\newtheorem{lemma}[theorem]{LEMMA}

\newtheorem{proposition}[theorem]{PROPOSITION}
\newtheorem{remark}[theorem]{REMARK}

%%%%%%%%%%%%%%%%%%%%%%%%%%%%%%%%%%%%%%%%%%%%%%%%%%%%%%%%%%%%%%%%%%%%%%%%%
%%%%%%%%%Fancy%%%%%%%%%%%%%%%%%%%%%%%%%%%%%%%%%%%%%%%%%%%%%%%%%%%%%%%%%%%

\pagestyle{plain}
\fancyhead{}
\fancyfoot{}

%%%%%%%%%Fancy%%%%%%%%%%%%%%%%%%%%%%%%%%%%%%%%%%%%%%%%%%%%%%%%%%%%%%%%%%%
%%%%%%%%%%%%%%%%%%%%%%%%%%%%%%%%%%%%%%%%%%%%%%%%%%%%%%%%%%%%%%%%%%%%%%%%%

\begin{document}

\title
{Integral points on the modular curves $X_0(p)$}

\author{Yulin Cai}

\newcommand{\address}{{
	\bigskip
	\footnotesize
	Y.~Cai,
	\textsc{Institut de Math\'ematiques de Bordeaux, Universit\'e de Bordeaux 
	351, cours de la Lib\'eration 33405 Talence Cedex, France}\par\nopagebreak

    \textit{E-mail address}: \texttt{yulin.cai1990@gmail.com}
    }}

\maketitle

\begin{abstract}
	
	In this paper, we give an explicit bound for the height of integral points on $X_0(p)$ by using a very explicit version of the Chevalley-Weil principle. We improve the bound given by Sha in \cite{sha2014bounding1}. 
\end{abstract}

{\footnotesize
	
	\tableofcontents
	
}

\section{Introduction}

Let~$X$ be a smooth, connected projective curve defined over a number field~$K$, and let ${x\in K(X)}$ be a non-constant rational function on~$X$. If~$R$ is a subring of~$K$, we denote by $X(R,x)$ the set of $R$-integral $K$-rational points of~$X$ with respect to $x$:
$$
X(R,x)=\{P\in X(K):x(P)\in R\}. 
$$
In particular, if~$S$ is a finite set of places of~$K$ (including all the infinite places), we consider the set of \textit{$S$-integral points} $X(\OO_S,x)$, where ${\OO_S=\OO_{S,K}}$ is the ring of $S$-integers in~$K$.

According to the classical theorem of Siegel~\cite{siegle1929uber} (see also \cite[Part~D]{hindry2000diophantine} for a modern exposition), the set $X(\OO_S,x)$ is finite if at least one of the following conditions is satisfied: 
\begin{align}
&g(X)\geq 1;\\
\label{ethreepoles}
&\text{$x$ admits at least~$3$ poles in $X(\bar\Q)$}. 
\end{align}
The theorem of Faltings~\cite{faltings1983end} (see also \cite[Part~E]{hindry2000diophantine}) asserts that $X(K)$ is finite if ${g(X)\geq 2}$. Unfortunately, all known proofs of the theorems of Siegel and Faltings are non-effective, which means that they do not imply any explicit expression bounding the heights of integral or rational points. 

Starting from the ground-breaking work of A.~Baker in 1960th, effective proofs of Siegel's theorem were discovered, by Baker and others, for many pairs $(X,x)$, see \cite{bilu1995effctive,bilu2002baker} and the references therein.

One interesting case is  when ${X=X_\Gamma}$ is the modular curve corresponding to a subgroup~$\Gamma$ of ${\Gamma(1)=\SL_2(\Z)}$, and ${x=j}$ is the rational function defined by the $j$-invariant.

Bilu~\cite[Section~5]{bilu1995effctive} (see also \cite[Section 4]{bilu2002baker}) made the following observation.

\begin{proposition}
	\label{prthree}
	Let $\Gamma$ be a congruence subgroup of $\SL_2(\Z)$ of level $N$ having at least $3$ cusps. Let $K$ be a number field such that $X_\Gamma$ admits a geometrically irreducible model over $K$ and such that ${j\in K(X_\Gamma)}$. Let~$S$ be a finite set of places of $K$ containing all the infinite places. Then there exists an effective constant ${c=c(N,K,S)}$ such that for any ${P\in X_\Gamma(\OO_S,j)}$ we have ${\height(j(P))\leq c}$. 
\end{proposition}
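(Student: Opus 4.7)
The plan is to observe that Proposition \ref{prthree} falls into the framework of effective Siegel's theorem under hypothesis \eqref{ethreepoles}, for which effective Baker-type proofs are well developed.

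The key is the geometric observation that the poles of $j \in K(X_\Gamma)$ in $X_\Gamma(\bar\Q)$ are precisely the cusps of $X_\Gamma$. This follows from the $q$-expansion $j(q) = q^{-1} + 744 + \cdots$ at the standard cusp, combined with the $\SL_2(\Z)$-equivariance of $j$ on the upper half-plane, which together force $j$ to be holomorphic on $X_\Gamma \setminus \{\text{cusps}\}$ and to have a pole at each cusp. The hypothesis that $\Gamma$ has at least three cusps therefore implies that $j$ satisfies condition \eqref{ethreepoles} on $X_\Gamma$.

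I would then invoke the effective form of Siegel's theorem under \eqref{ethreepoles}. The scheme is: after passing to a finite extension $L/K$ over which three chosen cusps are individually rational (and enlarging $S$ to the corresponding set $T$ of places of $L$), one constructs two $L$-rational functions $u_1, u_2$ on $X_\Gamma$ whose divisors are supported on these three cusps and which satisfy the linear relation $u_1 + u_2 = 1$. For any $P \in X_\Gamma(\OO_S, j)$, the integrality of $j(P)$ forces $P$ to be $v$-adically close to some cusp at each place $v$ outside a controlled set, so that the values $u_1(P)$ and $u_2(P)$ are $T'$-units for an effectively described enlargement $T' \supset T$. Baker's theorem on linear forms in logarithms, applied to the resulting $S$-unit equation $u_1(P) + u_2(P) = 1$, yields an effective bound on $\height(u_i(P))$, which transfers to an effective bound on $\height(j(P))$ via the explicit finite morphism $j \colon X_\Gamma \to \P^1$.

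The main obstacle, and the place where the quantitative content of the proposition resides, is producing the pair $(u_1, u_2)$ together with an effective description of the set $T'$ of bad places. Existence is guaranteed as soon as there are three distinct poles, but for \emph{effective} bounds one needs these functions concretely. On modular curves this is what the theory of modular units is designed to supply (ultimately relying on the Manin-Drinfeld theorem to produce enough relations in the divisor class group supported at cusps); for the specific curves $X_0(p)$ treated in the remainder of the paper one can instead exhibit the pair explicitly using $\Delta$-quotients, which is what leads to the explicit, improved bound announced in the abstract.
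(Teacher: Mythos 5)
First, a point of reference: the paper does not actually prove Proposition~\ref{prthree} --- it is quoted from Bilu \cite{bilu1995effctive,bilu2002baker} --- so your proposal must be measured against Bilu's argument. Your geometric reduction is exactly right and is the correct starting point: the poles of $j$ on $X_\Gamma$ are precisely the cusps, so three cusps place $(X_\Gamma,j)$ under condition~\eqref{ethreepoles}, and you correctly identify that effectivity must come from modular units (ultimately Manin--Drinfeld) fed into Baker's method. The gap is in the mechanism you describe for extracting the bound. You propose to construct two functions $u_1,u_2$ with divisors supported on three cusps and satisfying $u_1+u_2=1$, and then to solve the resulting $S$-unit equation. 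Such a pair does not exist in general: if $u_1$ is a modular unit on a curve of positive genus, then $1-u_1$ has poles only at cusps but its \emph{zeros} are in general not cusps (a function with divisor $(c_2)-(c_3)$ exists only in genus $0$; in higher genus Manin--Drinfeld only supplies $\mathrm{div}(u)=m((c_i)-(c_j))$ with $m>1$, and $1-u$ then acquires extraneous zeros). Consequently $u_2(P)$ need not be an $S'$-unit and the reduction to a unit equation collapses. This matters here, since the curves $X_\tilGamma$ to which the paper applies the proposition have large genus.

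The argument that actually works (Bilu \cite[Section~5]{bilu1995effctive}, \cite[Section~4]{bilu2002baker}) uses the units differently. Manin--Drinfeld gives, for each pair of cusps $c_i,c_j$, a unit $u_{ij}$ with $\mathrm{div}(u_{ij})=m_{ij}((c_i)-(c_j))$; since there are at least three cusps, every cusp $c$ lies in the support of two \emph{multiplicatively independent} such units $u,u'$, with orders $e,e'$ say. An $S$-integral point $P$ with $\height(j(P))$ large must be $v$-adically close to some cusp $c$ at a dominant place $v$; then $u^{e'}/(u')^{e}$ is regular and nonzero at $c$, so $e'\log_v u(P)-e\log_v u'(P)-\log_v\gamma$ (with $\gamma$ the value of $u^{e'}/(u')^{e}$ at $c$) is a nonvanishing, $v$-adically very small linear form in logarithms of algebraic numbers of controlled height; Baker's (and, at finite places, Yu's) lower bounds then bound $\height(j(P))$ directly. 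So the engine is Baker's inequality applied to values of modular units at points close to a cusp, not a functional identity $u_1+u_2=1$ on the curve. Your sketch needs to be repaired by replacing the unit-equation step with this mechanism (or by invoking Bilu's effectivity criterion, which encapsulates it).
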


(Here ${\height(\cdot)}$ is the standard absolute logarithmic  height defined on the set $\bar\Q$ of algebraic numbers.) 

In other words, if condition~\eqref{ethreepoles} is satisfied for the couple $(X_\Gamma,j)$, then Siegel's theorem is effective for this couple. 

Sha~\cite{sha2014bounding} made the bound in Proposition~\ref{prthree} totally explicit. The full statement of his result is quite involved, and we reproduce it only  in Section~\ref{sproof}, see Theorem~\ref{thsha} therein. Here we only notice that Sha's bound is of the shape
${c(K,S)^{N\log N}}$, where $c(K,S)$ is an effective constant depending only on~$K$ and~$S$. Roughly speaking, we have here exponential type dependence in~$N$.

Proposition~\ref{prthree} applies in many important cases: see \cite{bilu2002baker,bilu2011effective} for further details.  In particular, it applies to the  modular curve $X_0(N)$ of composite level~$N$. However, it does not directly apply to the curve $X_0(p)$ of prime level~$p$, because it has only~$2$ cusps.

Nevertheless, using a covering argument, Bilu~\cite[Theorem~10]{bilu2002baker} proved that Siegel's theorem is effective for $X_0(p)$ as well. Note that  the curve $X_0(N)$ has a standard geometrically irreducible model over~$\Q$. 

\begin{theorem}[Bilu]
	\label{thnonexp}
	Let~$p$ be a prime number distinct from $2,3,5,7,13$. Let~$K$ be a number field and~$S$ be  a finite set of places of~$K$ containing all the infinite places. Then there exists an effective constant ${c=c(p,K,S)}$ such that for any ${P\in X_0(p)(\OO_S,j)}$ we have ${\height(j(P))\le c}$. 
	\label{bilu X_0(p)}
\end{theorem}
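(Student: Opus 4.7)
My plan is to deduce Theorem~\ref{bilu X_0(p)} from Proposition~\ref{prthree} by a covering argument. The obstruction to applying Proposition~\ref{prthree} directly to $X_0(p)$ is that it has only two cusps; however, many natural modular covers of $X_0(p)$ do have three or more cusps, and Proposition~\ref{prthree} applies to them. The idea is to lift an integral point ${P\in X_0(p)(\OO_S,j)}$ to such a cover, via (a finite-case-analysis form of) the Chevalley--Weil principle, and then apply Proposition~\ref{prthree} on the cover.

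A convenient choice is the forgetful map ${\pi\colon X_1(p)\to X_0(p)}$: it is defined over $\Q$, satisfies ${j\circ\pi = j}$, is Galois with group $(\Z/p\Z)^\times/\{\pm 1\}$, and for $p\geq 5$ the curve $X_1(p)$ has $p-1\geq 4$ cusps, so Proposition~\ref{prthree} applies to $(X_1(p), j)$. The first step is to describe the branch locus of $\pi$. On the modular locus the Galois action on level-$\Gamma_1$ structures is free away from extra automorphisms of the underlying elliptic curve, so $\pi$ can be ramified outside the cusps only at the (finitely many) elliptic points of $X_0(p)$; these occur only when $p\equiv 1\pmod 4$ or $p\equiv 1\pmod 3$, and their $j$-invariants are $0$ or $1728$.

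The second step splits the argument into two cases. If $P\in X_0(p)(\OO_S,j)$ has ${j(P)\in\{0,1728\}}$, then trivially ${\height(j(P))\leq\log 1728}$, and the theorem holds in this case with an absolute constant. Otherwise $P$ lies outside the branch locus of $\pi$, and the Chevalley--Weil principle produces an effectively computable finite extension $K'/K$ and an enlarged finite set of places $S'$ of $K'$ containing $S$, both depending only on $p$, $K$, $S$, together with a lift ${Q\in X_1(p)(\OO_{S'},j)}$ of $P$ with ${j(Q)=j(P)}$. The third step is to apply Proposition~\ref{prthree} to $(X_1(p),j)$ over $K'$ with places $S'$, which yields an effective bound ${\height(j(Q))\leq c(p,K',S')}$; since ${j(P)=j(Q)}$ this immediately gives the desired bound on $\height(j(P))$.

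The main obstacle is producing the Chevalley--Weil step in a quantitatively effective form: one must control the degree and discriminant of $K'/K$, as well as the size of $S'\setminus S$, effectively in terms of $p$, $K$, and $S$, which requires a careful analysis of the ramification of $\pi$ and of the primes of bad reduction of $X_1(p)$ relative to $X_0(p)$. The restriction ${p\notin\{2,3,5,7,13\}}$ plays no role in the covering argument itself; it is precisely the set of primes for which $X_0(p)$ has positive genus, so that Siegel's theorem has non-trivial content for integral points on $X_0(p)$ with respect to $j$.
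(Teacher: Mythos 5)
Your plan correctly identifies the covering strategy, and you rightly note that the forgetful map $\pi\colon X_1(p)\to X_0(p)$ is ramified at the elliptic points of $X_0(p)$ (of $j$-invariant $0$ or $1728$). The gap is in the second step: excluding the two special values $j(P)\in\{0,1728\}$ does \emph{not} make the Chevalley--Weil principle applicable to $\pi$. Proposition~\ref{prthree}'s input, and the Chevalley--Weil statement as used in the paper, require the morphism of projective curves to be \'etale; $X_1(p)\to X_0(p)$ is not \'etale whenever $p\equiv 1\pmod 4$ or $p\equiv 1\pmod 3$. The obstruction is arithmetic rather than geometric: even when $P$ itself is not an elliptic point, the Zariski closure of $P$ in an integral model of $X_0(p)$ can meet the closure of the elliptic points modulo various primes $\mathfrak{p}$, and at those primes the extension $K(Q)/K(P)$ can ramify. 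The set of such $\mathfrak{p}$ is governed by the primes dividing $j(P)$ and $j(P)-1728$, hence depends on $P$ (indeed on its height) and is not bounded in terms of $p$, $K$, $S$ alone. There \emph{is} a variant of Chevalley--Weil for covers \'etale only away from a divisor $D$, but it requires the point to be $S$-integral with respect to $D$; here that would mean $1/j(P)$ and $1/(j(P)-1728)$ are $S$-integers, which is not part of the hypothesis. So the claimed $K'$ and $S'$ ``depending only on $p$, $K$, $S$'' cannot be produced this way.

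The paper avoids this entirely by replacing $X_1(p)$ with the intermediate curve $X_{\tilGamma}$, where $\tilGamma$ is the subgroup of $\Gamma_0(p)$ defined in \eqref{egtilde} by the condition $a^{12}\equiv 1\pmod p$. Enlarging $\Gamma_1(p)$ in this way absorbs the automorphism groups of orders dividing $12$ at the elliptic points, so $X_{\tilGamma}\to X_0(p)$ is genuinely \'etale (Proposition~\ref{pgamprime}, part (3), including at the cusps, where the ramification index is forced to be $1$ since $\deg\pi\le(p-1)/2<p$). With an honest \'etale cover, the Chevalley--Weil principle applies with a fixed exceptional set $T$ depending only on $p$, $K$, $S$, and $X_{\tilGamma}$ still has $\ge 3$ cusps for $p\notin\{2,3,5,7,13\}$, so Proposition~\ref{prthree} finishes the argument. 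If you want to repair your version, replace $X_1(p)$ by $X_{\tilGamma}$ and drop the case split on $j(P)$; the rest of your outline then matches the paper's proof.

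One small side remark: you explain the exclusion of $p\in\{2,3,5,7,13\}$ by noting that $X_0(p)$ has genus $0$ for those $p$; while true, the operative reason in this argument is that for exactly those primes $X_{\tilGamma}$ fails to have three cusps, so Proposition~\ref{prthree} does not apply to the cover.
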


The main tool is the classical \textit{Chevalley-Weil Principle}, used in the following form. 

\begin{proposition}[Chevalley-Weil Principle]
	Let ${\tilX\stackrel\pi\to X}$ be a non-constant \'etale morphism of projective algebraic curves defined over a number field~$K$. Then there exists a finite set~$T$ of places of~$K$ such that the following holds. 
	Let ${P\in X(\bar K)}$ and let ${\tilP \in \tilX(\bar K)}$ be such that ${\pi(\tilP)=P}$. Let~$v$ be a finite place of the field $K(P)$ ramified in $K(\tilP)$. Then~$v$ extends a place from~$T$.   
\end{proposition}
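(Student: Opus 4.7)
The plan is to prove the statement by \emph{spreading out}: extend the étale morphism $\pi$ to an étale morphism of integral models over a cofinite open subscheme of $\Spec \OO_K$, and then reduce the ramification assertion at a finite place to the fact that a finite étale extension of a discrete valuation ring is unramified.

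First I would fix projective integral models $\mathcal{X}$ and $\widetilde{\mathcal{X}}$ of $X$ and $\tilX$ over $\Spec \OO_K$ (for instance, by taking scheme-theoretic closures in projective spaces). By the standard spreading-out results of EGA~IV (\S8 and \S17), the étale morphism $\pi$ on the generic fibers extends to a finite étale morphism $\pi_U \colon \widetilde{\mathcal{X}}_U \to \mathcal{X}_U$ over some open $U = \Spec \OO_K \setminus T_0$, where $T_0$ is a finite set of finite places. Define $T$ to be $T_0$ together with all infinite places of $K$.

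Now let $P \in X(\bar K)$ and $\tilP \in \tilX(\bar K)$ satisfy $\pi(\tilP) = P$; set $L = K(P)$, $L' = K(\tilP)$, and let $v$ be a finite place of $L$ not extending any place of $T$, with $w$ a place of $L'$ above $v$. The valuative criterion of properness, applied to $\mathcal{X} \to \Spec\OO_K$ and $\widetilde{\mathcal{X}} \to \Spec\OO_K$, uniquely extends $P$ and $\tilP$ to morphisms $\bar P \colon \Spec \OO_{L,v} \to \mathcal{X}_U$ and $\bar{\tilP} \colon \Spec \OO_{L',w} \to \widetilde{\mathcal{X}}_U$ compatible with $\pi_U$. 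Pulling back $\pi_U$ along $\bar P$ yields a finite étale cover $Y \to \Spec \OO_{L,v}$; since the base is the spectrum of a discrete valuation ring, $Y$ decomposes as $\coprod_i \Spec A_i$ where each $A_i$ is a finite unramified extension of $\OO_{L,v}$. The morphism $\Spec \OO_{L',w} \to Y$ induced by $\bar{\tilP}$ and the universal property of the fiber product factors through a single component $\Spec A_i$ by connectedness, and matching residue and fraction fields forces $A_i = \OO_{L',w}$. Hence $w/v$ is unramified.

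The main obstacle I anticipate is the spreading-out step: one must verify that the finite étaleness of $\pi$ on the generic fiber propagates to an open neighborhood of the \emph{entire} integral model, away from only finitely many closed points of $\Spec \OO_K$. This is standard in EGA~IV but crucially uses both the constructibility of the locus where a morphism of finite presentation is étale and the flatness arising from the Dedekind-ness of the base; it is precisely what permits the reduction to the DVR-level statement ``étale implies unramified.'' A minor verification is that the component $A_i$ containing the image of $\bar{\tilP}$ has fraction field exactly $L'$, which follows because $\tilP$ has residue field $L'$ as a closed point of $\tilX$.
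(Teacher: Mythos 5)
The paper does not actually prove this proposition; it cites it as a classical fact, and then proves an explicit variant in Lemma~2.2 (\emph{uramified}) for use in the main argument. So there is no ``paper's own proof'' of the statement as written to compare against literally, but the comparison with Lemma~2.2 is illuminating.

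Your spreading-out argument is correct and is the standard textbook proof of the qualitative Chevalley--Weil principle: extend $\pi$ to a finite \'etale morphism of integral models over a cofinite open $U \subseteq \Spec\OO_K$ by EGA~IV constructibility and generic flatness, take $T$ to be the complement, and reduce, via the valuative criterion of properness and base change of the \'etale cover along the section $\bar P$, to the fact that a connected finite \'etale algebra over a DVR is an unramified DVR extension. Your flagged verification --- that the relevant component $A_i$ has $\Frac(A_i)=L'$ --- is fine: the generic fiber of $Y\to\Spec\OO_{L,v}$ is scheme-theoretically $\pi^{-1}(P)$ over $L$, so $\tilP$ picks out a component with residue field $L'$, and the induced local inclusion of DVRs $A_i\hookrightarrow\OO_{L',w}$ with equal fraction fields is an equality.

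The paper's Lemma~2.2 goes a different route because it must \emph{identify} $T$, not merely assert its existence: it assumes $X$ has a smooth model at $\gerp$ and a degree/separability constraint, uses Liu--Lorenzini to push the smooth model down to $Y$, and then --- the substantive step --- proves the extended morphism of integral models is \'etale everywhere via Zariski--Nagata purity, the smoothness of the special fibers (which forces the would-be ramification divisor to be vertical with trivial ramification index at its generic point), and separability of the residue extension from the degree hypothesis. The payoff is that $T$ consists precisely of the primes of bad reduction (controlled by Igusa's theorem for modular curves) together with primes $\le \deg\pi$, which is what yields the quantitative improvement in Corollary~3.3 and ultimately Theorem~1.4. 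Your argument buys generality and brevity; the paper's buys the explicit bound \eqref{ereldis}, which the non-effective spreading-out set $T$ could not deliver.
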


Bilu found a subgroup~$\tilGamma$ of $\Gamma_0(p)$ such that the natural morphism ${X_\tilGamma\to X_0(p)}$ is \'{e}tale and~$X_\tilGamma$ has at least three cusps, see Proposition~\ref{pgamprime} for the details.  The Chevalley-Weil principle now allows one to reduce the problem from $X_0(p)$ to $X_\tilGamma$, where Proposition~\ref{prthree} applies. 

In \cite{sha2014bounding1} Sha gave an explicit version of Theorem \ref{thnonexp}. We again do not reproduce here Sha's full statement, which is very involved, and only focus on the dependence on the level~$p$. One can expect here exponential type dependence in $p$, but Sha obtains  an upper bound of the form $c(K,S)^{\exp(p^6\log p)}$, doubly exponential in $p$. 

Sha's bound is so big because he uses a quantitative version of the Chevalley-Weil Principle from~\cite{bilu2013quantitative}, which provides  extremely high upper bounds for the quantities involved.

In this paper, we will use Igusa's theorem, see \cite[Section 8.6]{diamond2005first}, combined with Proposition \ref{model} and Lemma \ref{uramified}, to have a very explicit version of the Chevalley-Weil principle in the special case we need. Then 
we manage to improve the result of Sha by using it instead of the general quantitative Chevelley-Weil principle from \cite{sha2014bounding1}. For a finite place~$v$ of a number field~$K$ we denote by $\NN_{K/\Q} (v)$ the absolute norm of the prime ideal corresponding to~$v$. We will prove the following theorem.

\begin{theorem}
    Keep the notations of Theorem \ref{bilu X_0(p)}. Then for $ P \in X_0(p)(\OO_S,j)$, we have 
	$$\height(j(P)) \leq  e^{9s^2p^4\log p}C(K,S)^{p^2},$$
	where $C(K,S)$ can be effectively determined in terms of K and S. More explicitly, $C(K,S)$ can be chose as 
	$$C(K,S)=2^{31s}d^{9s}s^{2s}\ell^d|D|(\log{(|D|+1)})^{d}\prod\limits_{\substack{v \in S \\ v  \nmid \infty}}\log\mathcal{N}_{K/\mathbb{Q}}(v),$$
	where $d=[K:Q]$, $D$ is the absolute discriminant of $K$, $s = |S|$, and $\ell$ is the maximal prime such that there exists $v\in S$ with $v|\ell$. 
	\label{main}
\end{theorem}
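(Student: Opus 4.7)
The plan is to follow the covering strategy of Bilu (Theorem~\ref{thnonexp}) but to replace the general quantitative Chevalley-Weil of \cite{bilu2013quantitative} used by Sha with a tailor-made explicit version for the specific cover $X_{\tilGamma}\to X_0(p)$. Fix the intermediate modular curve $X_{\tilGamma}$ attached to the subgroup $\tilGamma\subset\Gamma_0(p)$ supplied by Proposition~\ref{pgamprime}: it is a finite \'etale cover of $X_0(p)$, and $X_{\tilGamma}$ has at least three cusps, so Proposition~\ref{prthree} applies to the pair $(X_{\tilGamma},j)$. Since $j(\tilP)=j(P)$ for any $\tilP\in\pi^{-1}(P)$, it suffices to bound $\height(j(\tilP))$ for a suitable lift of $P$.

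The heart of the argument is the explicit Chevalley-Weil step. Given $P\in X_0(p)(\OO_S,j)$, I would apply Igusa's good-reduction theorem for $X_0(p)$ together with Proposition~\ref{model} (giving an integral model of $X_{\tilGamma}$) and Lemma~\ref{uramified} (controlling the places where $\pi$ can introduce ramification in residue fields of points). These combine to produce an explicit finite set $T$ of places of~$K$ such that any lift $\tilP$ of $P$ is $(S\cup T)$-integral on $X_{\tilGamma}$, with $T$ consisting essentially of the places dividing $p$ and the level of $\tilGamma$, and with residue characteristics and cardinality bounded polynomially in $p$. This is a drastic improvement over the general quantitative Chevalley-Weil, which would produce $|T|$ and $\mathcal{N}_{K/\Q}(v)$ exploding like $\exp(p^6\log p)$, and is precisely the source of the upgrade from Sha's doubly exponential bound to ours.

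With $\tilP\in X_{\tilGamma}(\OO_{S\cup T},j)$ in hand, I would feed the pair $(X_{\tilGamma},j)$ into Sha's effective result (Theorem~\ref{thsha}). The level of $\tilGamma$ is at most a fixed power of $p$ (from Proposition~\ref{pgamprime}), so Sha's $c(K,S)^{N\log N}$ shape specialises to an exponent of order $p^2\log p$ in the level contribution, while the enlargement of $S$ to $S\cup T$ contributes the factor $e^{9s^2p^4\log p}$ coming from the $(|S|+|T|)$-dependence in Sha's constant and the norm contributions of places in $T$. Substituting the known expression for Sha's $c(K,S)$ in terms of $d$, $D$, $s$, and $\ell$ and collecting the purely arithmetic data of $K$ and~$S$ into a single quantity $C(K,S)$ produces the stated explicit form.

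The main obstacle is the construction and bookkeeping of the set $T$: one must show that the ramification of $\pi\colon X_{\tilGamma}\to X_0(p)$ at a specialised point extends no further than the places predicted by Igusa together with the integral model of Proposition~\ref{model}, and that the residue norms of the places added to~$S$ are controlled polynomially in $p$. Once this is done, the passage from $\tilP$ back to $P$ via $j(\tilP)=j(P)$ is immediate, and the remaining work is the careful substitution of numerical constants into Sha's explicit bound so that the final exponent of $C(K,S)$ is no worse than $p^2$ and the level factor is no worse than $e^{9s^2p^4\log p}$.
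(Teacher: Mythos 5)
Your high-level architecture matches the paper's: pass to the \'etale cover $\pi\colon X_{\tilGamma}\to X_0(p)$, observe $j(\tilP)=j(P)$, and apply Sha's Theorem~\ref{thsha} to the three-cusp curve $X_{\tilGamma}$, with Igusa's theorem, Proposition~\ref{model}, and Lemma~\ref{uramified} supplying the explicit Chevalley-Weil input. However, your description of where Chevalley-Weil enters is not how the argument works, and following it as written would miss the point.

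You claim CW produces a set $T$ of places of $K$ so that $\tilP$ is $(S\cup T)$-integral, and you then propose to feed $\tilP\in X_{\tilGamma}(\OO_{S\cup T},j)$ into Sha. But $\tilP$ is a point defined over the larger field $\tilK=K(\tilP)$, and its integrality is automatic: since $j(\tilP)=j(P)\in\OO_{S}\subseteq\OO_{\tilS}$, where $\tilS$ is the set of places of $\tilK$ lying over $S$, one has $\tilP\in X_{\tilGamma}(\OO_{\tilS},j)$ with no enlargement of the place set whatsoever. The Chevalley-Weil input serves a completely different purpose: Lemma~\ref{uramified} (together with Igusa's good reduction and Proposition~\ref{model}) shows that $\tilK/K$ is unramified above every rational prime except $p$ and the primes $\le (p-1)/2$, and Lemma~\ref{lded} then converts this into the explicit discriminant bound $|\NN_{K/\Q}(D_{\tilK/K})|\le p^{d^2(p-1)^3/8}$ of Corollary~\ref{cchw}. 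This bound feeds into Sha's theorem through the quantity $\Delta_0$, which depends on the absolute discriminant $\tilD$ of the new base field $\tilK$.

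Consequently, your attribution of the factor $e^{9s^2p^4\log p}$ to ``the $(|S|+|T|)$-dependence in Sha's constant and the norm contributions of places in $T$'' is also off: that factor is driven by the discriminant bound $D^*=p^{d^2(p-1)^3/8}|D|^{(p-1)/2}$ appearing inside $\Delta(p)$, not by any enlargement of $S$. The quantities that do grow upon passing to $\tilK$ are $\tild\le d(p-1)/2$, $\tils\le s(p-1)/2$, $|\tilD|\le D^*$, and $\prod_{v\in\tilS,\,v\nmid\infty}\log\NN_{\tilK/\Q}(v)$; the bulk of the calculation is the bookkeeping needed to express these back in terms of the invariants of $(K,S)$. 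Once you correct the mechanism in this way, the rest of your plan --- invoking case (2) of Theorem~\ref{thsha} with $M=2p$ since $p$ is prime, then substituting constants --- lines up with the paper.
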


\section{Lemmas}

For the convenience of the readers, we state a result from \cite{liu1999models}.

\begin{proposition}[\cite{liu1999models} Corollary 4.10]
	Let $K$ be a discrete valuation field with ring of integers $\mathcal{O}_K$, and $f: X \rightarrow Y$ be a finite morphism of smooth, connected projective curves over $K$.
	Assume that $g(Y) \geq 1$, and that $X$ admits a smooth projective model $\mathcal{X}$. Then $Y$ admits a smooth projective model $\mathcal{Y}$, and $f$ extends to a finite morphism $\mathcal{X} \rightarrow \mathcal{Y}$.
   \label{model}
\end{proposition}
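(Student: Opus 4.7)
The plan combines the theory of minimal regular models of arithmetic surfaces with a local multiplicity computation. First, since $f$ is finite and nonconstant with $g(Y)\geq 1$, Riemann--Hurwitz gives $g(X)\geq g(Y)\geq 1$. Hence both $X$ and $Y$ admit unique minimal regular proper models over $\mathcal{O}_K$ (Lichtenbaum; see Chapters~9 and~10 of Liu's book \emph{Algebraic Geometry and Arithmetic Curves}). Moreover $\mathcal{X}$ itself is the minimal regular model of $X$: being smooth over $\mathcal{O}_K$ it is regular, and its irreducible smooth special fiber of genus $g(X)\geq 1$ cannot be a $(-1)$-curve.

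The first main step is to construct a morphism from $\mathcal{X}$ to the minimal regular model $\mathcal{Y}_{\min}$ of $Y$ that extends $f$. Take the normalization $\mathcal{X}'$ of $\mathcal{Y}_{\min}$ in the function field $K(X)$; this yields a finite morphism $\mathcal{X}'\to\mathcal{Y}_{\min}$ with $\mathcal{X}'$ a normal proper model of $X$. Desingularize $\mathcal{X}'$ to obtain a regular model $\mathcal{X}''\to\mathcal{X}'\to\mathcal{Y}_{\min}$; iteratively contracting $(-1)$-curves in $\mathcal{X}''_s$ must lead, by uniqueness of the minimal regular model, back to $\mathcal{X}$. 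The point is that each such contraction is compatible with the morphism to $\mathcal{Y}_{\min}$: a contracted $\mathbb{P}^1$ either maps to a point in $\mathcal{Y}_{\min}$ (the compatible case) or dominantly to an irreducible component $F\subset\mathcal{Y}_{\min,s}$, and the latter would force $F$ to be rational --- whereupon an intersection-theoretic computation using adjunction and the projection formula on the finite cover $\mathcal{X}'\to\mathcal{Y}_{\min}$ would realise $F$ itself as a $(-1)$-curve, contradicting the minimality of $\mathcal{Y}_{\min}$.

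The second main step proves that $\mathcal{Y}_{\min}$ is smooth. The resulting map $g\colon\mathcal{X}\to\mathcal{Y}_{\min}$ is proper and generically surjective, hence surjective; consequently $g_s\colon\mathcal{X}_s\to\mathcal{Y}_{\min,s}$ is surjective, so $\mathcal{Y}_{\min,s}$ is irreducible and $g_s$ is a finite morphism of projective curves. Thus $g$ is quasi-finite and proper, hence finite. If some component of $\mathcal{Y}_{\min,s}$ had multiplicity $m>1$, then locally on $\mathcal{Y}_{\min}$ one could write $\pi=u\cdot t^m$ with $u$ a unit and $t$ a local parameter along that component; pulling back, in the DVR $\mathcal{O}_{\mathcal{X},\eta}$ at the generic point $\eta$ of the smooth (hence reduced) special fiber $\mathcal{X}_s$, the valuation of $\pi$ equals $1$ while that of $g^*(t)^m\cdot g^*(u)$ is a multiple of $m$, a contradiction. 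Thus $\mathcal{Y}_{\min,s}$ is a reduced Cartier divisor in the regular $2$-dimensional scheme $\mathcal{Y}_{\min}$, so locally of the form $\Spec(R/(\pi))$ for $R$ a regular $2$-dimensional local ring; such a quotient is regular of dimension one, and since the residue field is perfect, $\mathcal{Y}_{\min,s}$ is smooth. Setting $\mathcal{Y}:=\mathcal{Y}_{\min}$ completes the argument.

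The main obstacle is the compatibility of the $(-1)$-curve contractions with the morphism to $\mathcal{Y}_{\min}$ in Step~1: this is where the minimality of $\mathcal{Y}_{\min}$ must be exploited via a careful intersection-theoretic computation on the finite cover $\mathcal{X}'\to\mathcal{Y}_{\min}$. Once this is in place, the finiteness of $g$ and the reducedness argument of Step~2 deliver the smoothness of $\mathcal{Y}_{\min}$ through a clean local algebra computation.
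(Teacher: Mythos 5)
The paper itself gives no proof of this proposition: it is imported verbatim from Liu--Lorenzini \cite{liu1999models} (Corollary 4.10), so there is no internal argument to compare yours with; what you offer is a from-scratch proof, and it has two genuine gaps, precisely at the two places where the real work lies.

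First, the descent of the morphism $\mathcal{X}''\to\mathcal{Y}_{\min}$ through the chain of contractions down to $\mathcal{X}$ is only asserted. Your proposed mechanism --- that an exceptional curve $E$ dominating a component $F\subset\mathcal{Y}_{\min,s}$ would, ``by adjunction and the projection formula'', force $F$ to be a $(-1)$-curve and so contradict minimality --- is not a valid general principle. Under a finite (or proper generically finite) morphism of regular arithmetic surfaces an exceptional curve can perfectly well dominate a non-exceptional rational component: for a degree-$2$ cover with $\phi^*F=2E$ and $E\to F$ birational, $F\cong\mathbb{P}^1_k$ with $F^2=-2$ gives $4E^2=2F^2=-4$, i.e.\ $E\cong\mathbb{P}^1_k$ with $E^2=-1$, so $E$ is exceptional while $F$ is a $(-2)$-curve, fully compatible with minimality of the base. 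Minimality of $\mathcal{Y}_{\min}$ excludes exceptional components only, not rational ones, and minimal regular models generally contain many rational components; so the heart of your Step~1 is missing, and filling it requires the finer analysis of how normalizations, desingularizations and contractions interact with finite covers, which is exactly the content of the Liu--Lorenzini paper (they even give examples showing such extension statements are delicate). A smaller issue in the same step: the desingularization and minimal-model machinery you invoke requires an excellent (or complete) base, whereas the proposition is stated for an arbitrary discrete valuation ring, so a reduction to the completion has to be made explicit.

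Second, the final smoothness deduction is incorrect. Your multiplicity computation does show that $\mathcal{Y}_{\min,s}$ is reduced (that part, like the finiteness of $g$ via quasi-finite plus proper, is fine), but a reduced Cartier divisor in a regular $2$-dimensional scheme need not be regular: $\Spec\,\mathcal{O}_K[[u,v]]/(v^2-u^2(u+1)-\pi)$ is regular while its special fiber is an integral nodal curve. So ``such a quotient is regular of dimension one'' is false, and the existence of a finite surjection from the smooth curve $\mathcal{X}_s$ does not help by itself (a normalization map is finite and surjective from a smooth curve onto a singular one). Moreover the residue field of a general discrete valuation ring is not perfect, so even regularity of the fiber would not yield smoothness as stated. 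Excluding singular points of $\mathcal{Y}_{\min,s}$ needs a genuinely different argument; as written, your proof establishes at best that $\mathcal{Y}_{\min,s}$ is integral and that $\mathcal{X}\to\mathcal{Y}_{\min}$ is finite \emph{provided} the unproved descent step of Step~1 holds.
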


\begin{lemma}
Let $f: X\rightarrow Y$ be a finite, \'{e}tale morphism of smooth, connected projective curves over a number field $K$ with $g(Y) \geq 1$, and let $\mathfrak{p} \subset \mathcal{O}_K$ be a non-zero prime with residue field $k(\mathfrak{p})$. Suppose that 
\begin{enumerate}
	\item[(1)] $X$ admits a smooth projective model at $\mathfrak{p}$;
	\item[(2)] $[K(X):K(Y)] < \Char(k(\mathfrak{p}))$ or $K(X)/K(Y)$ is Galois of degree prime to $\Char(k(\mathfrak{p}))$.
\end{enumerate}
 Then for every point $P \in Y(K)$ and $Q\in f^{-1}(P)$, we have that $\mathfrak{p}$ is unramified in the residue field $K(Q)$ of $Q$. 
\label{uramified}
\end{lemma}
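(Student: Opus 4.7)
My plan is to localize at $\mathfrak{p}$ and show that $f$ extends to a finite \'etale morphism between smooth projective models over $\mathcal{O}_{K,\mathfrak{p}}$; by the valuative criterion of properness the preimages of $P$ then correspond to unramified DVR extensions of $\mathcal{O}_{K,\mathfrak{p}}$, which gives the claim. Concretely, I would invoke Proposition~\ref{model} over $\mathcal{O}_{K,\mathfrak{p}}$: since $g(Y) \geq 1$ and $X$ has a smooth projective model $\mathcal{X}$, we obtain a smooth projective model $\mathcal{Y}$ of $Y$ together with a finite extension $\tilde f \colon \mathcal{X} \to \mathcal{Y}$ of $f$. Both $\mathcal{X}$ and $\mathcal{Y}$ are regular of dimension two, so $\tilde f$ is flat by miracle flatness.

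I then verify that $\tilde f$ is \'etale at every codimension-one point of $\mathcal{Y}$. For horizontal codimension-one points this is immediate from the \'etaleness of $f$ on the generic fiber. For the generic point $\eta$ of the special fiber $Y_{\mathfrak{p}}$, pick any point $\tilde\eta$ of $\mathcal{X}$ lying above it. Since $\mathcal{X}$ and $\mathcal{Y}$ are smooth over $\mathcal{O}_{K,\mathfrak{p}}$ their special fibers are reduced, so a uniformizer $\pi$ of the base DVR is simultaneously a uniformizer of $\mathcal{O}_{\mathcal{Y},\eta}$ and of $\mathcal{O}_{\mathcal{X},\tilde\eta}$; hence the ramification index $e_{\tilde\eta}$ equals $1$. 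For true unramifiedness the residue-field extension $k(\tilde\eta)/k(\eta)$ must also be separable, and this is where hypothesis~(2) enters: in case~(a) one has $\sum_{\tilde\eta} f_{\tilde\eta} = [K(X):K(Y)] < \Char(k(\mathfrak{p}))$, so every $f_{\tilde\eta}$ is coprime to the characteristic; in the Galois case~(b) all $f_{\tilde\eta}$ are equal and divide $n = [K(X):K(Y)]$, which is coprime to the characteristic. In either scenario each residue extension is separable, and Zariski--Nagata's purity of branch locus upgrades codimension-one \'etaleness to \'etaleness of $\tilde f$ everywhere.

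Finally, by the valuative criterion the point $P \in Y(K)$ extends to a section $s \colon \Spec \mathcal{O}_{K,\mathfrak{p}} \to \mathcal{Y}$, and the pullback $s^{\ast}\tilde f \colon \Spec A \to \Spec \mathcal{O}_{K,\mathfrak{p}}$ is finite \'etale. Decomposing $A \cong \prod_i A_i$ into a product of local unramified DVR extensions of $\mathcal{O}_{K,\mathfrak{p}}$ whose fraction fields are precisely the residue fields $K(Q_i)$ of the preimages of $P$, we conclude that $\mathfrak{p}$ is unramified in every $K(Q_i)$. The principal obstacle is the \'etaleness of $\tilde f$ at $\tilde\eta$: while $e_{\tilde\eta} = 1$ comes for free from smoothness of the models, separability of the residue-field extension genuinely needs hypothesis~(2), and in particular the Galois assumption in case~(b) is essential for forcing each $f_{\tilde\eta}$ to divide $n$ (without it, an individual $f_{\tilde\eta}$ could be a power of the characteristic, destroying \'etaleness even when $e_{\tilde\eta} = 1$).
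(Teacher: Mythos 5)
Your proposal is correct and follows essentially the same route as the paper: extend $f$ to a finite morphism of smooth models via Proposition~\ref{model}, use smoothness of the special fibers plus hypothesis~(2) to get unramifiedness (hence \'etaleness) at the vertical codimension-one points, invoke Zariski--Nagata purity to conclude the extended morphism is \'etale, and then base-change along the section through $P$ to read off unramifiedness of $\mathfrak{p}$ in $K(Q)$. The only cosmetic difference is that the paper phrases the purity step as a proof by contradiction, whereas you verify codimension-one \'etaleness directly; the substance is identical.
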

\begin{proof}
	Suppose that $\mathcal{X}$ is the smooth model of $X$ over $\mbox{Spec}(\mathcal{O}_{K,\mathfrak{p}})$. Since $f$ is finite, and $g(Y) \geq 1$, then by Proposition \ref{model}, $Y$ admits a smooth model $\mathcal{Y}$ and $f$ is extended to a finite morphism $\mathcal{X} \rightarrow \mathcal{Y}$. We still denote the extended morphism by $f$. 	
	
	We endow the closure $\overline{\{P\}}$ of $\{P\}$ in $\mathcal{Y}$ with structure of reduced closed subscheme. It is a section of $\mathcal{Y}$ over  $\mbox{Spec}(\mathcal{O}_{K,\mathfrak{p}})$, that is because  $P \in Y(K)$, and $\overline{\{P\}}$ is finite, birational over $\mbox{Spec}(\mathcal{O}_{K,\mathfrak{p}})$. Consider $\mathcal{X}\times_{\mathcal{Y}}\overline{\{P\}}$. It is finite over $\overline{\{P\}} \simeq \mbox{Spec}(\mathcal{O}_{K,\mathfrak{p}})$, hence affine, denoted by $\mbox{Spec}(A)$. Its underlying space is $f^{-1}(\overline{\{P\}})$. If $\mathcal{X} \rightarrow {\mathcal{Y}}$ is \'{e}tale, then after the base change $\overline{\{P\}} \rightarrow \mathcal{Y}$, $\mbox{Spec}(A) \rightarrow \mbox{Spec}(\mathcal{O}_{K,\mathfrak{p}})$ is also \'{e}tale. Since $\mathcal{O}_{K,\mathfrak{p}}$ is regular, so $A$ is regular too. Suppose that $A = \bigoplus\limits_{i=1}^{m}A_i$ such that $A_i$ is connected for each $i$. The fact that $A$ is regular and finite over $\mathcal{O}_{K,\mathfrak{p}}$ implies that $A_i$ is normal and finite over $\mathcal{O}_{K,\mathfrak{p}}$ for each $i$. In particular, the affine ring corresponding to $\overline{\{Q\}}$ is the integral closure of $\mathcal{O}_{K,\mathfrak{p}}$ in $K(Q)$. Any closed point $x$ on $\overline{\{Q\}}$ is also a closed point on $\mbox{Spec}(A)$. We know that $\overline{\{Q\}}$ and $\Spec(A)$ have the same local rings at $x$, so $\overline{\{Q\}} \rightarrow \overline{\{P\}}$ is \'{e}tale at $x$. Hence $\mathfrak{p}$ is unramified in $K(Q)$.
	
	It remains to show that  $\mathcal{X} \rightarrow {\mathcal{Y}}$ is \'{e}tale. Let $Z$ be the set of points in $\mathcal{X}$ at which $f$ is not \'{e}tale, then $Z$ is closed in $\mathcal{X}$. If $Z \neq \emptyset$, since $Z \neq \mathcal{X}$, by Zariski-Nagata purity theorem in \cite[Th\'eor\`em de puret\'e 3.1]{grothendieck2002rev}, it is purely of codimension $1$.  Any irreducible component $W$ of $Z$ is vertical, because $X \rightarrow Y$ is \'etale.  Let $\eta$ be the generic point of $W$, then $\xi = f(\eta) \in \mathcal{Y}$ is also a generic point in $\mathcal{X}_s$ from the fact that $f$ is dominant and finite, where $\mathcal{X}_s$ is the special fiber of $\mathcal{X}$. Consider $f^\#_\eta:\mathcal{O}_{\mathcal{Y},\xi} \rightarrow \mathcal{O}_{\mathcal{X},\eta}$. We claim that the maximal ideals of $\mathcal{O}_{\mathcal{Y},\xi}$ and $\mathcal{O}_{\mathcal{X},\eta}$ are $\mathfrak{p}\mathcal{O}_{\mathcal{Y},\xi}$ and $\mathfrak{p}\mathcal{O}_{\mathcal{X},\eta}$ respectively. Indeed, we have that $\mathcal{O}_{\mathcal{X}_s,\eta} = \mathcal{O}_{\mathcal{X},\eta}/\mathfrak{p}\mathcal{O}_{\mathcal{X},\eta}$, and the special fiber $\mathcal{X}_s$ is smooth, so  $\mathcal{O}_{\mathcal{X}_s,\eta}$ is integral with only one prime ideal. Hence $\mathcal{O}_{\mathcal{X}_s,\eta}$ is a field, and $\mathfrak{p}\mathcal{O}_{\mathcal{X},\eta}$ is the maximal ideal of $\mathcal{O}_{\mathcal{X},\eta}$. It is similar for $\mathcal{O}_{\mathcal{Y},\xi}$. On the other hand, $[k(\eta):k(\xi)] \leq [K(X):K(Y)]$ and $[k(\eta):k(\xi)] | [K(X):K(Y)]$ if $X \rightarrow Y$ is Galois. By the assumption (2), the residue degree $[k(\eta): k(\xi)] < \Char(k(\mathfrak{p}))$ or $[k(\eta): k(\xi)] $ prime to $ \Char(k(\mathfrak{p}))$, so $k(\eta)/k(\xi)$ is separable. Hence $\mathcal{O}_{\mathcal{Y},\xi} \rightarrow \mathcal{O}_{\mathcal{X},\eta}$ is unramified. It is also flat since it is injective and $\mathcal{O}_{\mathcal{Y},\xi}$ is a Dedekind domain, hence also \'{e}tale. Contradiction. 
\end{proof}

We will combine the lemma above with the following lemma in the sequel.

\begin{lemma}
	\label{lded}
	Let $L/K$ be a finite extension of number fields and ~$T$ be a finite set of prime numbers such that every ramified place is above a prime from~$T$. Then 
	$$
	\bigl|\NN_{K/\Q}(D_{L/K})\bigr|\le \left(\prod_{p\in T}p\right)^{[L:\Q]^2},
	$$
	where $D_{L/K}$ is the discriminant of $L$ over $K$.
\end{lemma}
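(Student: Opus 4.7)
The plan is to reduce the question to a purely local bound on the different exponent at each rational prime of $T$. Writing $\mathfrak{D}_{L/K}$ for the different of $L/K$, the relation $D_{L/K}=\NN_{L/K}(\mathfrak{D}_{L/K})$ combined with transitivity of ideal norms gives
$$
\bigl|\NN_{K/\Q}(D_{L/K})\bigr| \;=\; \bigl|\NN_{L/\Q}(\mathfrak{D}_{L/K})\bigr| \;=\; \prod_{p}p^{\sum_{\mathfrak{P}\mid p}f(\mathfrak{P}/p)\,d(\mathfrak{P}/\mathfrak{p})},
$$
where $\mathfrak{P}$ runs over primes of $L$ above the rational prime $p$, $\mathfrak{p}=\mathfrak{P}\cap\mathcal{O}_K$, and $d(\mathfrak{P}/\mathfrak{p})$ denotes the $\mathfrak{P}$-adic exponent of $\mathfrak{D}_{L/K}$. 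By hypothesis only $p\in T$ contribute, so it suffices to show that for each such $p$ the exponent $\sum_{\mathfrak{P}\mid p}f(\mathfrak{P}/p)\,d(\mathfrak{P}/\mathfrak{p})$ is at most $[L:\Q]^2$.

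Next I would combine multiplicativity of the different in the tower $\Q\subset K\subset L$, which immediately yields $d(\mathfrak{P}/\mathfrak{p})\leq d(\mathfrak{P}/p)$, with the classical local bound (Serre, \emph{Local Fields}, Ch.~III, Prop.~13):
$$
d(\mathfrak{P}/p) \;\leq\; e(\mathfrak{P}/p)-1+v_{\mathfrak{P}}\bigl(e(\mathfrak{P}/p)\bigr).
$$
Because $v_{\mathfrak{P}}(n)=e(\mathfrak{P}/p)\,v_p(n)$ and $v_p\bigl(e(\mathfrak{P}/p)\bigr)\leq \log_p[L:\Q]$, this simplifies to $d(\mathfrak{P}/p)\leq e(\mathfrak{P}/p)(1+\log_p[L:\Q])$, and the elementary inequality $1+\log_p n\leq n$ (valid for integers $n\geq1$ and primes $p\geq2$) then gives $d(\mathfrak{P}/p)\leq e(\mathfrak{P}/p)\,[L:\Q]$.

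Summing over $\mathfrak{P}\mid p$ and invoking the fundamental identity $\sum_{\mathfrak{P}\mid p}e(\mathfrak{P}/p)f(\mathfrak{P}/p)=[L:\Q]$ produces
$$
\sum_{\mathfrak{P}\mid p}f(\mathfrak{P}/p)\,d(\mathfrak{P}/\mathfrak{p}) \;\leq\; [L:\Q]\cdot[L:\Q] \;=\; [L:\Q]^2,
$$
and multiplying over $p\in T$ yields the claimed bound. The argument is essentially a direct application of the discriminant-different dictionary together with the standard tame/wild estimate, so I do not foresee any serious obstacle; the only mildly subtle point is passing from $d(\mathfrak{P}/\mathfrak{p})$ to $d(\mathfrak{P}/p)$, which is exactly what allows one to apply Serre's bound at the level of $\Q_p$ and to absorb the wild ramification into the logarithmic factor $\log_p[L:\Q]$.
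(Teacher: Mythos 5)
Your proof is correct and rests on the same underlying local estimate as the paper's: the bound on the wild part of ramification, which the paper states via the Dedekind Discriminant Formula (Bombieri--Gubler, Theorem B.2.12, with $\delta_{\mathfrak{P}}\le v_{\mathfrak{p}}(e_{\mathfrak{P}/\mathfrak{p}})$) and you state as Serre's $d(\mathfrak{P}/p)\le e(\mathfrak{P}/p)-1+v_{\mathfrak{P}}\bigl(e(\mathfrak{P}/p)\bigr)$ --- these are the same fact. The only organizational difference is that you do the bookkeeping at the primes of $L$ (working directly with $\mathfrak{D}_{L/K}$ and $\NN_{L/\Q}$, and using transitivity of the different to pass to $L/\Q$), whereas the paper bounds $\nu_{\mathfrak{p}}(D_{L/K})\le[L:K][L:\Q]$ at each prime $\mathfrak{p}$ of $K$ and then applies $\NN_{K/\Q}$; both routes land on the same exponent $[L:\Q]^2$.
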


\begin{proof}
	The ``Dedekind Discriminant Formula'' \cite[Theorem B.2.12]{bombieri2007heights} implies that
	$$
	\nu_\gerp(D_{L/K}) = \sum_{\gerP\mid \gerp}(e_{\gerP/\gerp}(1+\delta_{\gerP})-1)f_{\gerP/\gerp} \leq [K:\Q]\sum_{\gerP\mid \gerp}e^2_{\gerP/\gerp}f_{\gerP/\gerp}\leq [L:K][L:\Q],
	$$
	where~$\gerp$ is a prime of~$K$ ramified in~$L$, the sum is over the primes of~$L$ above~$\gerp$, and $0 \leq \delta_\gerP \leq v_\gerp(e_{\gerP/\gerp}) < [K:\Q]e_{\gerP/\gerp}$. For every such~$\gerp$ we have ${\bigl|\NN_{K/\Q}\gerp\bigr| = p^{f_{\gerp/p}}}$, where~$p$ is the prime number below~$\gerp$. Hence 
	$$
	\bigl|\NN_{K/\Q}(D_{L/K})\bigr|\le \left(\prod_{p\in T}p^{\sum_{\gerp\mid p}f_{\gerp/p}}\right)^{[L:K][L:\Q]} \le \left(\prod_{p\in T}p^{[K:\Q]}\right)^{[L:K][L:\Q]}= \left(\prod_{p\in T}p\right)^{[L:\Q]^2}.
	$$
\end{proof}

\section{Proof of  theorem \ref{main} \label{sproof}}

\subsection{An \'{e}tale covering \label{etale covering}}

\label{smorph}
Let~$\tilGamma$ be the subgroup of $\Gamma_0(p)$ defined as follows: set $A = \{a\in \mathbb{F}_p^*: a^{12}=1\}$, and 
\begin{equation}
\label{egtilde}
\tilGamma=\left\{\begin{bmatrix}a&b\\c&d\end{bmatrix}\in \Gamma_0(p): a \bmod p \in A\right\}. 
\end{equation}
It is not hard to see that  the curve $X_{\tilGamma}$ and the natural morphisms $X_1(p) \rightarrow {X_{\tilGamma}\stackrel\pi\to X_0(p)}$ are defined over~$\Q$.   
\begin{proposition}
	\label{pgamprime}
	\begin{enumerate}
		\item
			We have $\deg \pi \leq \dfrac{p-1}{2}$.
		
		\item
		When ${p\notin\{2,3,5,7,13\}}$, the curve $X_{\tilGamma}$ has at least~$3$ cusps.
	
		\item
		\label{ietale}
		The  morphism~$\pi$   is \'{e}tale.  
	\end{enumerate}
\end{proposition}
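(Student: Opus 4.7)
The key observation driving all three parts is the surjective group homomorphism $\varphi\colon \Gamma_0(p)\to \mathbb{F}_p^*$, $\begin{bmatrix}a&b\\c&d\end{bmatrix}\mapsto a\bmod p$: it is well defined since $c\equiv 0\pmod p$, and multiplicative because the product of two upper-triangular matrices over $\mathbb{F}_p$ has upper-left entry equal to the product of upper-left entries. Thus $\tilGamma=\varphi^{-1}(A)$ is a \emph{normal} subgroup of $\Gamma_0(p)$ with cyclic quotient $\mathbb{F}_p^*/A$, where $|A|=\gcd(12,p-1)$, and $-I\in\tilGamma$ because $(-1)^{12}=1$.

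\emph{Part (1).} Since both $\Gamma_0(p)$ and $\tilGamma$ contain $-I$, the degree of $\pi$ equals the plain index $[\Gamma_0(p):\tilGamma]=(p-1)/\gcd(12,p-1)$. For odd $p$ we have $2\mid \gcd(12,p-1)$, so $\deg\pi\le (p-1)/2$.

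\emph{Part (2).} I will count cusps by the standard double-coset formula: the cusps of $X_{\tilGamma}$ lying above the cusp of $X_0(p)$ represented by $x\in\mathbb{P}^1(\mathbb{Q})$ biject with $\tilGamma\backslash \Gamma_0(p)/\mathrm{Stab}_{\Gamma_0(p)}(x)$. The stabilizers of $\infty$ and of $0$ are generated respectively by $\begin{bmatrix}1&1\\0&1\end{bmatrix}$ and $\begin{bmatrix}1&0\\p&1\end{bmatrix}$ together with $-I$; all these matrices have upper-left entry $\pm 1\in A$, so both stabilizers sit inside $\tilGamma$. Hence every cusp of $X_0(p)$ has exactly $[\Gamma_0(p):\tilGamma]$ preimages, giving in total $2(p-1)/\gcd(12,p-1)$ cusps on $X_{\tilGamma}$. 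This is $\ge 3$ precisely when $p-1\nmid 12$, i.e.\ when $p\notin\{2,3,5,7,13\}$.

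\emph{Part (3), the substantive step.} Normality of $\tilGamma$ makes $\pi$ a Galois cover with abelian Galois group $\mathbb{F}_p^*/A$, so the ramification index at any point of $X_{\tilGamma}$ above $P_0\in X_0(p)$ is the order of the image, under $\varphi$, of the stabilizer of $P_0$ in $\Gamma_0(p)/\{\pm I\}$. Away from cusps and elliptic points this stabilizer is trivial; the cusp stabilizers already lie in $\tilGamma$ by Part (2). The remaining and decisive case is the elliptic one: at an elliptic point of order $e\in\{2,3\}$ the stabilizer is cyclic of order $e$ in $\Gamma_0(p)/\{\pm I\}$, generated by some $\gamma\in\Gamma_0(p)$ with $\gamma^e=\pm I$ in $\SL_2(\mathbb{Z})$. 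Reducing this relation modulo $p$, where $\bar\gamma$ is upper-triangular in $\SL_2(\mathbb{F}_p)$, forces $a^e\equiv \pm 1\pmod p$, hence $a^{12}\equiv 1\pmod p$ because $2e\in\{4,6\}$ divides $12$. Therefore $\varphi(\gamma)\in A$, the elliptic stabilizers map into $\tilGamma$, and $\pi$ is unramified at the points above them. Combining the three cases gives that $\pi$ is étale. The whole argument rests on this elliptic-point calculation, and it is exactly the reason the exponent $12=\mathrm{lcm}(4,6)$ appears in the definition of $A$: any smaller exponent would fail to absorb both the order-$2$ and the order-$3$ elliptic stabilizers simultaneously.
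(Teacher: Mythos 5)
Your proof is correct, and for parts (2) and (3) it takes a genuinely different and more self-contained route than the paper. The paper computes the index as you do in part (1) (via the Borel subgroup of $\SL_2(\mathbb{F}_p)$ rather than your homomorphism $\varphi$, but it is the same count), cites Bilu's paper both for the cusp count in part (2) and for \'etaleness away from the cusps in part (3), and then disposes of the cusps by a degree argument: since $\Gamma(p)\subseteq\tilGamma$ and the map ${X(p)\to\P^1}$ has ramification index~$p$ at every cusp, the ramification index of~$\pi$ at a cusp divides~$p$, and ${\deg\pi\le(p-1)/2<p}$ forces it to be~$1$. Your argument instead exploits that $\tilGamma=\varphi^{-1}(A)$ is normal with abelian quotient, so ramification everywhere is controlled by whether the relevant stabilizer maps into~$A$; this handles interior points, elliptic points ($a^{e}\equiv\pm1$ with $2e\mid 12$) and cusps ($a\equiv\pm1$) uniformly, and as a by-product yields the exact cusp count $2(p-1)/\gcd(12,p-1)$, making part (2) and the excluded set $\{2,3,5,7,13\}$ transparent. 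What the paper's route buys is brevity (it leans on the published computation for the elliptic points); what yours buys is a complete proof from scratch and a conceptual explanation of why the exponent $12$ in the definition of~$A$ is exactly what is needed.
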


\begin{proof}
	Set $\overline{\tilGamma}$ the image of $\tilGamma$ in $\SL_2(\mathbb{F}_p)$, then we have
	$$\mbox{deg}\pi = [\Gamma_0(p):\tilGamma] = [\mbox{ST}_2(\mathbb{F}_p):\overline{\tilGamma}] = p(p-1)/(p|A|) \leq \dfrac{p-1}{2}.$$
	
	The second assertion is proved in \cite[page~84]{bilu2002baker}.
	
	About the third assertion, it is only proved in~\cite{bilu2002baker} that~$\pi$ is \'{e}tale outside the cusps. In fact, $\pi$ is \'{e}tale at the cusps as well. Indeed, the $j$-map ${X(p)\stackrel j\to \P^1}$ has ramification index~$p$ at every cusp. Hence~$1$ and~$p$ are the only possible ramification indices for~$\pi$. Since ${\deg \pi \le (p-1)/2<p}$, the ramification indices at the cusps are all~$1$.
\end{proof}

\begin{corollary}
	\label{cchw}
	Let $K$ be a number field, $P \in X_0(p)(K)$ and $\tilP \in \pi^{-1}(P)$. Then 
	\begin{align}
	\label{ereldeg}
	[\tilK:K] \leq \dfrac{p-1}{2},\\
	\label{ereldis}
	\bigl|\NN_{K/\Q}(D_{\tilK/K})\bigr| \leq p^{d^2(p-1)^3/8},
	\end{align}
	where $\tilK = K(\tilP)$, the residue field of $\tilP$, and $d = [K:\Q]$.
\end{corollary}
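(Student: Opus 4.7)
The first inequality \eqref{ereldeg} is immediate: since $\tilP\in\pi^{-1}(P)$ and $\pi$ has degree at most $(p-1)/2$ by Proposition~\ref{pgamprime}(1), the residue field extension $\tilK/K$ has degree at most $\deg\pi\le(p-1)/2$.

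For \eqref{ereldis} the plan is to combine Lemmas~\ref{uramified} and~\ref{lded}. First I will identify a finite set $T$ of rational primes containing every prime that ramifies in $\tilK/K$. By Proposition~\ref{pgamprime}(3) the covering $\pi:X_\tilGamma\to X_0(p)$ is \'etale, and by Igusa's theorem (as cited in the introduction) $X_1(p)$ has good reduction over $\Z[1/p]$; since $\Gamma_1(p)\subseteq\tilGamma\subseteq\Gamma_0(p)$, a standard descent argument shows that $X_\tilGamma$ also admits a smooth projective model at every prime $\gerp$ not lying above~$p$. Consequently Lemma~\ref{uramified} applies at every $\gerp$ whose residue characteristic $\ell$ satisfies $\ell\neq p$ and $\ell>\deg\pi$, forcing such $\gerp$ to be unramified in $\tilK/K$. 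Hence
$$
T \;\subseteq\; \{p\}\cup\bigl\{\ell \text{ prime} : \ell\le(p-1)/2\bigr\}.
$$

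With this $T$ in hand, Lemma~\ref{lded} combined with the trivial estimate $[\tilK:\Q]=[\tilK:K]\cdot d\le d(p-1)/2$ yields
$$
\bigl|\NN_{K/\Q}(D_{\tilK/K})\bigr| \;\le\; \Bigl(\prod_{\ell\in T}\ell\Bigr)^{[\tilK:\Q]^2} \;\le\; \Bigl(\prod_{\ell\in T}\ell\Bigr)^{d^2(p-1)^2/4}.
$$
It then suffices to show $\prod_{\ell\in T}\ell\le p^{(p-1)/2}$, because raising this to the power $d^2(p-1)^2/4$ produces exactly the target bound $p^{d^2(p-1)^3/8}$. This final estimate follows from a routine primorial bound such as the Chebyshev inequality $\prod_{\ell\le x,\,\ell\text{ prime}}\ell<4^{x}$: one verifies that $p\cdot 4^{(p-1)/2}\le p^{(p-1)/2}$ holds for every $p\ge 11$, which covers all primes not excluded by Theorem~\ref{bilu X_0(p)}.

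The most delicate ingredient is the assertion that $X_\tilGamma$ inherits a smooth projective model over $\Z[1/p]$ from $X_1(p)$, which is what allows Lemma~\ref{uramified} to be applied at primes above any $\ell\neq p$. Once this is granted, the argument reduces to a mechanical combination of the two lemmas with a crude estimate on the product of small primes.
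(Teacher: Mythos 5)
Your proof is correct and follows the same route as the paper: Igusa's good reduction for $X_1(p)$, the Liu--Lorenzini descent (Proposition~\ref{model}) to get a smooth model for $X_\tilGamma$ away from $p$, Lemma~\ref{uramified} to confine ramification to primes $\le(p-1)/2$ together with $p$, and Lemma~\ref{lded} for the discriminant bound. In fact you are slightly more complete than the paper, which invokes Lemma~\ref{lded} and asserts \eqref{ereldis} directly without writing out the elementary primorial estimate $p\cdot\prod_{\ell\le(p-1)/2}\ell\le p^{(p-1)/2}$ that is needed to pass from $\bigl(\prod_{\ell\in T}\ell\bigr)^{d^2(p-1)^2/4}$ to $p^{d^2(p-1)^3/8}$.
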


\begin{proof}
	It follows from Proposition \ref{pgamprime} and the formula
	$$\mbox{deg} \pi = \sum\limits_{Q\in \pi^{-1}(P)}[K(Q):K]$$
	that 
	$$
	[\tilK:K]\le \deg \pi \le (p-1)/2.
	$$
	We know that the modular curve $X_1(p)$ has good reductions outside $p$ by Igusa's Theorem, see \cite[Section 8.6]{diamond2005first}. Now by Proposition \ref{model}, $X_\tilGamma$ also admits good reduction outside $p$. Combining this with  Proposition \ref{pgamprime},  Lemma \ref{uramified} and the fact that $[K(X_\tilGamma):K(X_0(p))] = \deg\pi \leq \frac{p-1}{2}$, we apply Lemma~\ref{lded}  with $T=\{q: q \leq (p-1)/2, \textrm{$q$ is prime} \} \cup\{p\}$, we obtain~\eqref{ereldis}.  
\end{proof}

\subsection{Calculations}

For a number field $K$, and a finite subset $S \subseteq M_K$ containing all infinite places, we put $d=[K:\mathbb{Q}]$ and $s = |S|$. Let $\mathcal{O}_K$ be the ring of integers of $K$. We define the following quantity
$$\Delta_0(N): = \sqrt{N^{dN}|D|^{\varphi(N)}}(\mbox{log}(N^{dN}|D|^{\varphi(N)}))^{d\varphi(N)}\times \left(\prod\limits_{\substack{v \in S\\  v\nmid \infty}}\mbox{log}\mathcal{N}_{K/\mathbb{Q}}(v)\right)^{\varphi(N)}$$
as a function of $N \in \N^+$, where $D$ is the absolute discriminant of $K$, $\varphi(N)$ is the Euler's totient function, and the norm $\mathcal{N}_{K/\mathbb{Q}}(v)$ of a place $v$, by definition, is equal to $|\mathcal{O}_K/\mathfrak{p}_v|$ when $v$ is finite and $\mathfrak{p}_v$ is its corresponding prime ideal, and is set to be $1$ if $v$ is infinite.

With these notations above, the main tool is following theorem proved by M.Sha in \cite{sha2014bounding}. The form of this theorem with explicit constant will be used in our proof of Theorem \ref{main}.

\begin{theorem}[\cite{sha2014bounding} Theorem 1.2]
	\label{thsha}
	Let $\Gamma$ be a congruence subgroup of level $N$ and $X_\Gamma$ be the corresponding modular curve over a number field $K$ with $d=[K:\mathbb{Q}]$, and $S \subseteq M_K$ be a finite set containing all archimedean places. If $v_\infty(\Gamma) \geq 3$, then for any $ P \in X_\Gamma(\mathcal{O}_S,j)$, the following holds.
	\begin{itemize}
		\item[(1)] If $N$ is not a power of any prime, we have 
		$$\height(j(P))\leq (CdsN^2)^{2sN}(\log(dN))^{3sN}\ell^{dN}\Delta_0(N),$$
		where $C$ is an absolute effective constant, and $\ell$ is the maximal prime such that there exists $v\in S$ with $v|\ell$.
		\item[(2)] If $N$ is a power of some prime, we have 
		$$\height(j(P))\leq (CdsM^2)^{2sM}(\log(dM))^{3sM}\ell^{dM}\Delta_0(M),$$
		where $C$ is an absolute effective constant, $\ell$ is the maximal prime such that there exists $v\in S$ with $v|\ell$, and $M$ is defined as following: $M=3N$ if $N$ is a power of $2$, and $M=2N$ otherwise.
	\end{itemize}
	
	\label{bound}
\end{theorem}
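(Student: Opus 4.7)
The plan is to apply Sha's explicit bound (Theorem \ref{thsha}) upstairs on the étale cover $\pi\colon X_\tilGamma\to X_0(p)$ constructed in Section \ref{etale covering}, using the sharp Chevalley–Weil information supplied by Corollary \ref{cchw}; it is the use of Corollary \ref{cchw} in place of the general quantitative Chevalley–Weil of \cite{bilu2013quantitative} that produces the improvement over \cite{sha2014bounding1}.

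First I would fix $P\in X_0(p)(\OO_S,j)$, choose a lift $\tilP\in\pi^{-1}(P)$ and set $\tilK=K(\tilP)$. Corollary \ref{cchw} (relying in turn on Lemma \ref{uramified} and Igusa's theorem, which guarantee that $X_\tilGamma$ has good reduction outside~$p$) yields $[\tilK:K]\le(p-1)/2$ and the explicit discriminant estimate \eqref{ereldis}. Writing $\tild=[\tilK:\Q]\le d(p-1)/2$, and letting $\tilS$ be the set of places of $\tilK$ lying above $S\cup\{p\}$, one has $\tils:=|\tilS|\le(s+1)(p-1)/2$. Since $j\in\Q(X_\tilGamma)$ pulls back from $j\in\Q(X_0(p))$, the equality $j(\tilP)=j(P)\in\OO_S\subset\OO_\tilS$ shows that $\tilP\in X_\tilGamma(\OO_\tilS,j)$.

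Next I would invoke case~(2) of Theorem \ref{thsha} on $\tilP$, with $N=p$ and $M=2p$, legitimate because $p$ is odd and, by Proposition \ref{pgamprime}(2), $X_\tilGamma$ has at least three cusps. The resulting estimate is
$$
\height(j(P))=\height(j(\tilP))\le (C\tild\tils(2p)^2)^{4\tils p}(\log(2\tild p))^{6\tils p}\ell_{\max}^{2\tild p}\Delta_0(2p),
$$
where $\ell_{\max}\le\max(\ell,p)$ is the largest rational prime below some place of $\tilS$ and $\Delta_0(2p)$ is computed over $(\tilK,\tilS)$. Transitivity of the discriminant together with \eqref{ereldis} gives
$$
|\tilD|\le |D|^{(p-1)/2}\,p^{d^2(p-1)^3/8},
$$
and each finite $w\in\tilS$ over some $v\in S\cup\{p\}$ satisfies $\log\NN_{\tilK/\Q}(w)\le\tild\log\NN_{K/\Q}(v)$, which rewrites the product over $\tilS$ in $\Delta_0(2p)$ as the analogous product over $S$ from the statement, times powers of $p$ and $\tild$.

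The main obstacle is the bookkeeping needed to verify that the above substitutions collect into $e^{9s^2p^4\log p}\,C(K,S)^{p^2}$ with the precise $C(K,S)$ of the statement. The $p^4$ in the dominant exponent originates from the factor $|\tilD|^{\varphi(2p)/2}\le p^{d^2(p-1)^4/16}\,|D|^{(p-1)^2/4}$ appearing under the square root in $\Delta_0(2p)$, while the exponent $p^2$ attached to $C(K,S)$ comes from the trivial bound $\varphi(2p)\cdot[\tilK:K]\le (p-1)^2/2\le p^2$, which governs all factors in $\Delta_0(2p)$ that depend on $|D|$, on $(\log(|D|+1))^d$ or on the product over finite $v\in S$. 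The absolute constants and the dependence on $d$, $s$, $\ell$ contained in $(C\tild\tils(2p)^2)^{4\tils p}$, $(\log(2\tild p))^{6\tils p}$ and in the remaining pieces of $\Delta_0(2p)$ must then be expanded using $\tild\le dp$, $\tils\le sp$ and the trivial $d\le s$. Pinning down the specific numerical exponents $2^{31s}$, $d^{9s}$, $s^{2s}$ and $\ell^d$ in $C(K,S)$, as well as checking that the coefficient $9$ is sufficient to absorb all $p$-dependent terms, is a routine but careful Stirling-type computation that constitutes the only non-trivial step once this skeleton is in place.
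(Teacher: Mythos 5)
Your proposal does not prove the statement it is attached to. Theorem \ref{thsha} is Sha's explicit height bound for $S$-integral points on an \emph{arbitrary} modular curve $X_\Gamma$ of level $N$ with at least three cusps; in the present paper it is not proved at all but quoted as an external tool from \cite{sha2014bounding}, where it is established by Baker's method — explicit lower bounds for linear forms in logarithms applied to values of modular units (Siegel functions) near the cusps, together with explicit control of their Fourier expansions and of the archimedean and non-archimedean contributions. Your argument contains none of this Diophantine-approximation input. Instead, you take Theorem \ref{thsha} as given and combine it with the \'etale cover $\pi\colon X_\tilGamma\to X_0(p)$ of Section \ref{etale covering}, Corollary \ref{cchw} and Lemma \ref{lded} to bound $\height(j(P))$ for $P\in X_0(p)(\OO_S,j)$. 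That is exactly the paper's proof of Theorem \ref{main} in Section \ref{sproof}, not a proof of Theorem \ref{thsha}; read as a proof of the latter it is circular, since the very bound to be established is invoked in the second step, and it cannot be repaired without importing the Baker-method machinery of \cite{sha2014bounding}.

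As a sketch of the proof of Theorem \ref{main}, on the other hand, your outline follows the paper's route essentially verbatim (choose $\tilP\in\pi^{-1}(P)$, control $[\tilK:K]$, $|\tilD|$ and the places of $\tilS$ via Corollary \ref{cchw}, then apply case (2) of Theorem \ref{thsha} with $M=2p$), with two deviations worth flagging. The paper takes $\tilS$ to be only the places of $\tilK$ above $S$, so $\tils\le s(p-1)/2$ and the prime $\ell$ in the final bound is the one attached to $S$; adjoining the places above $p$, as you do, is unnecessary (already $j(\tilP)=j(P)\in\OO_S\subset\OO_\tilS$) and would pollute the constant with $\max(\ell,p)$. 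Also, the inequality $d\le s$ you invoke in the bookkeeping is false in general (only $d\le 2s$ holds, e.g.\ for imaginary quadratic $K$ with $S$ the archimedean place); the paper avoids it by keeping $d$ and $s$ as separate parameters in $C(K,S)$.
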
 

\begin{remark}
	By following the calculation carefully, see \cite{cai2019explicit}, one can show that $2^{15}$ is a suitable value for the constant $C$. 
\end{remark}
\

We begin to prove Theorem \ref{main}. Recall the congruence subgroup $\tilGamma \subset \Gamma_0(p)$ defined in subsection \ref{etale covering}, i.e.  
$$\tilGamma=\{\left[
{\begin{array}{cc}
	a&b\\
	c&d
	\end{array}}\right]\in \Gamma_0(p):a\ \mbox{mod} \ p \in A\},$$ 
where $A = \{a\in \mathbb{F}_p^*: a^{12}=1\}$, and the natural map $\pi: X_{\tilGamma} \rightarrow X_0(p)$. For any $ P \in X_0(p)(\mathcal{O}_S,j)$, there exist a finite extension $\tilK$ of $K$ and $\tilP \in X_{\tilGamma}(\tilK)$ such that $\pi(\tilP)=P$. A non-constant morphism between irreducible projective curves is always surjective, so $\pi(X_{\tilGamma})=X_0(p)$. Obviously, $\height(j(\tilP)) = \height(j(P))$, so it's sufficient to bound $\height(j(\tilP))$. Hence we consider the points in $X_{\tilGamma}(\mathcal{O}_{\tilS},j)$, where 
$$\tilS=\{v\in M_{\tilK}: v|w\ \mbox{for some}\ w\in S\}.$$
By Proposition \ref{pgamprime}, we know that $X_{\tilGamma}$ has at least three cusps.

To apply Theorem \ref{bound} for $\height(j(\tilP))$, we should bound some invariants of $\tilK$ and $\tilS$. 
We fix some notations before proceeding with the proof, we set 
$$\tilde{\Delta}_0  := \sqrt{(2p)^{2\tild p}|\tilD|^{p-1}}(\log((2p)^{2\tild p}|\tilD|^{p-1}))^{\tild(p-1)}\times \left(\prod\limits_{\substack{v \in \tilS\\  v\nmid \infty}}\log\mathcal{N}_{\tilK/\mathbb{Q}}(v)\right)^{p-1},$$
$$D^*:=p^{d^2\frac{(p-1)^3}{8}}|D|^{\frac{p-1}{2}},$$
\begin{equation*}
\begin{aligned}
\Delta(p) := \sqrt{(2p)^{dp(p-1)}|D^*|^{p-1}}(\log((2p)^{dp(p-1)}|D^*|^{p-1}))^{d\frac{(p-1)^2}{2}} \times \left(\prod\limits_{\substack{v \in S \\ v  \nmid \infty}}\log\mathcal{N}_{K/\mathbb{Q}}(v)\right)^{\frac{(p-1)^2}{2}},
\end{aligned}
\end{equation*}
where $\tild:=[\tilK:\mathbb{Q}]$, and $\tilD$ is the absolute discriminant of $\tilK$.

We follow the idea of \cite{sha2014bounding}. Let $\tilde{s}= |\tilS|$, then $\tilde{s}\leq [\tilK:K]s\leq \dfrac{p-1}{2}s$ and $\tild \leq d\dfrac{p-1}{2}$. For the absolute discriminant $\tilD$ of $\tilK$, we have
\begin{equation*}
\begin{aligned}
|\tilD| &= |\mathcal{N}_{K/\mathbb{Q}}(D_{\tilK/K})||D|^{[\tilK:K]}\\
&\leq p^{d^2\frac{(p-1)^3}{8}}|D|^{\frac{p-1}{2}}\\
&= D^*.
\end{aligned}
\end{equation*}

Now let $w$ be a non-Archimedean place of $K$, and $v_1,\dots, v_m$ be all its extensions to $\tilK$ with residue degrees $f_1,\dots, f_m$ respectively over $K$. Then $f_1+ \dots + f_m \leq [\tilK:K] \leq \dfrac{p-1}{2}$, which implies $\log_2 f_1+ \dots +\log_2 f_m \leq f_1+ \dots + f_m \leq \dfrac{p-1}{2}$, i.e. $f_1\dots f_m \leq 2^{\frac{p-1}{2}}$. Since $\mathcal{N}_{\tilK/\Q}(v_k) = \mathcal{N}_{K/\mathbb{Q}}(w)^{f_k}$ for $1 \leq k \leq m$, we have 
$$\prod\limits_{v | w}\log\mathcal{N}_{\tilK/\mathbb{Q}}(v) \leq 2^{\frac{p-1}{2}}(\log\mathcal{N}_{K/\mathbb{Q}}(w))^{\frac{p-1}{2}}.$$
Hence 
$$\prod\limits_{\substack{v \in \tilS\\  v\nmid \infty}}\log\mathcal{N}_{\tilK/\mathbb{Q}}(v) \leq 2^{s\frac{p-1}{2}}\left(\prod\limits_{\substack{v \in S\\  v\nmid \infty}}\log\mathcal{N}_{K/\mathbb{Q}}(v)\right)^{\frac{p-1}{2}},$$
and 
\begin{equation*}
\begin{aligned}
\tilde{\Delta}_0  &= \sqrt{(2p)^{2\tild p}|\tilD|^{p-1}}(\log((2p)^{2\tild p}|\tilD|^{p-1}))^{\tild(p-1)}\times \left(\prod\limits_{\substack{v \in \tilS\\  v\nmid \infty}}\log\mathcal{N}_{\tilK/\mathbb{Q}}(v)\right)^{p-1}\\
&\leq \sqrt{(2p)^{dp(p-1)}|D^*|^{p-1}}(\log((2p)^{dp(p-1)}|D^*|^{p-1}))^{d\frac{(p-1)^2}{2}}\times 2^{s\frac{(p-1)^2}{2}}\\ &\ \  \times \left(\prod\limits_{\substack{v \in S\\  v\nmid \infty}}\log\mathcal{N}_{K/\mathbb{Q}}(v)\right)^{\frac{(p-1)^2}{2}}\\
&= 2^{s\frac{(p-1)^2}{2}}\Delta(p).
\end{aligned}
\end{equation*}
By Theorem \ref{bound}, we have
\begin{equation*}
\begin{aligned}
\mbox{h}(j(P))  &= \height(j(Q))\\
&\leq (C\tild
\tils(2p)^2)^{4\tils p}(\log(2\tild p))^{6\tils p}\ell^{2\tild p}\tilde{\Delta}_0\\
&\leq 2^{s\frac{(p-1)^2}{2}}(Cds(p-1)^2p^2)^{2sp(p-1)}(\log(dp(p-1)))^{3sp(p-1)}\ell^{dp(p-1)}\Delta(p)\\
\end{aligned}
\end{equation*}
where $\ell$ is the maximal prime such that there exists $v\in S$ with $v|\ell$.

This bound can be made clearer. Indeed, we have the inequalities
$$D^* \leq e^{d^2p^3/8\log p}|D|^{p/2},$$
\begin{equation*}
\begin{aligned}
\Delta(p) & \leq e^{d^2p^4\log p}(2^d|D|)^{p^2} \cdot (d^2p^4\log p+p^2\log|D|)^{dp^2/2} \times \left(\prod\limits_{\substack{v \in S \\ v  \nmid \infty}}\log\mathcal{N}_{K/\mathbb{Q}}(v)\right)^{p^2}\\
 & \leq e^{4s^2p^4\log p}(2^d|D|)^{p^2} \cdot (d^2p^5/2\log(|D|+1))^{dp^2} \times \left(\prod\limits_{\substack{v \in S \\ v  \nmid \infty}}\log\mathcal{N}_{K/\mathbb{Q}}(v)\right)^{p^2}\\
&\leq e^{7s^2p^4\log p}\left((d^2\log(|D|+1))^d |D|\prod\limits_{\substack{v \in S \\ v  \nmid \infty}}\log\mathcal{N}_{K/\mathbb{Q}}(v)\right)^{p^2}\\
&= e^{7s^2p^4\log p} C_1(K,S)^{p^2},
\end{aligned}
\end{equation*}
and
\begin{equation*}
\begin{aligned}
\mbox{h}(j(P))  &\leq 2^{sp^2}(Cdsp^4)^{2sp^2}(\log d + 2\log p)^{3sp^2}\ell^{dp^2}e^{7s^2p^4\log p}C_1(K,S)^{p^2}\\
& \leq e^{9s^2p^4\log p}\cdot 2^{sp^2}(Cds)^{2sp^2}(2d)^{3sp^2}\ell^{dp^2}C_1(K,S)^{p^2}\\
& \leq e^{9s^2p^4\log p}\left(2^{s} \cdot C^{2s}d^{9s}s^{2s}\ell^d|D|(\log{(|D|+1)})^{d}\prod\limits_{\substack{v \in S \\ v  \nmid \infty}}\log\mathcal{N}_{K/\mathbb{Q}}(v)\right)^{p^2}\\
&= e^{9s^2p^4\log p}C(K,S)^{p^2}.
\end{aligned}
\end{equation*}
Hence we get Theorem \ref{main} if we take $C= 2^{15}$, see \cite{cai2019explicit}.

\section*{Acknowledgements}
	The research of the author is supported by the China Scholarship Council. The author also thanks his supervisors Yuri Bilu and Qing Liu for helpful discussions and valuable suggestions.

\address

\begin{thebibliography}{}
	
	\bibitem[\protect\astroncite{Bilu}{1995}]{bilu1995effctive}
	Bilu, {\relax Yu}. (1995).
	\newblock Effective analysis of integral points on algebraic curves.
	\newblock {\em Israel J. Math.}, 90(1-3):235--252.
	
	\bibitem[\protect\astroncite{Bilu}{2002}]{bilu2002baker}
	Bilu, {\relax Yu}. (2002).
	\newblock Baker's method and modular curves.
	\newblock In {\em A panorama of number theory or the view from {B}aker's garden
		({Z}\"{u}rich, 1999)}, pages 73--88. Cambridge Univ. Press, Cambridge.
	
	\bibitem[\protect\astroncite{Bilu and Illengo}{2011}]{bilu2011effective}
	Bilu, {\relax Yu}. and Illengo, M. (2011).
	\newblock Effective {S}iegel's theorem for modular curves.
	\newblock {\em Bull. Lond. Math. Soc.}, 43(4):673--688.
	
	\bibitem[\protect\astroncite{Bilu et~al.}{2013}]{bilu2013quantitative}
	Bilu, {\relax Yu}., Strambi, M., and Surroca, A. (2013).
	\newblock Quantitative {C}hevalley-{W}eil theorem for curves.
	\newblock {\em Monatsh. Math.}, 171(1):1--32.
	
	\bibitem[\protect\astroncite{Bombieri and Gubler}{2006}]{bombieri2007heights}
	Bombieri, E. and Gubler, W. (2006).
	\newblock {\em Heights in {D}iophantine geometry}, volume~4 of {\em New
		Mathematical Monographs}.
	\newblock Cambridge University Press, Cambridge.
	
	\bibitem[\protect\astroncite{Cai}{2019}]{cai2019explicit}
	Cai, Y. (2019).
	\newblock An explicit bound of integral points on modular curves.
	\newblock {\em arXiv preprint arXiv:1910.10405}.
	
	\bibitem[\protect\astroncite{Diamond and Shurman}{2005}]{diamond2005first}
	Diamond, F. and Shurman, J.~M. (2005).
	\newblock {\em A first course in modular forms}, volume 228.
	\newblock Springer.
	
	\bibitem[\protect\astroncite{Faltings}{1983}]{faltings1983end}
	Faltings, G. (1983).
	\newblock Endlichkeitss\"{a}tze f\"{u}r abelsche {V}ariet\"{a}ten \"{u}ber
	{Z}ahlk\"{o}rpern.
	\newblock {\em Invent. Math.}, 73(3):349--366.
	
	\bibitem[\protect\astroncite{Grothendieck and
		Raynaud}{2002}]{grothendieck2002rev}
	Grothendieck, A. and Raynaud, M. (2002).
	\newblock Rev\^{e}tements \'etales et groupe fondamental (sga 1).
	\newblock {\em arXiv preprint math/0206203}.
	
	\bibitem[\protect\astroncite{Hindry and
		Silverman}{2000}]{hindry2000diophantine}
	Hindry, M. and Silverman, J.~H. (2000).
	\newblock {\em Diophantine geometry}, volume 201 of {\em Graduate Texts in
		Mathematics}.
	\newblock Springer-Verlag, New York.
	\newblock An introduction.
	
	\bibitem[\protect\astroncite{Liu and Lorenzini}{1999}]{liu1999models}
	Liu, Q. and Lorenzini, D. (1999).
	\newblock Models of curves and finite covers.
	\newblock {\em Compositio Mathematica}, 118(1):61--102.
	
	\bibitem[\protect\astroncite{Sha}{2014a}]{sha2014bounding1}
	Sha, M. (2014a).
	\newblock Bounding the {$j$}-invariant of integral points on certain modular
	curves.
	\newblock {\em Int. J. Number Theory}, 10(6):1545--1551.
	
	\bibitem[\protect\astroncite{Sha}{2014b}]{sha2014bounding}
	Sha, M. (2014b).
	\newblock Bounding the {$j$}-invariant of integral points on modular curves.
	\newblock {\em Int. Math. Res. Not. IMRN}, (16):4492--4520.
	
	\bibitem[\protect\astroncite{{Siegel}}{1929}]{siegle1929uber}
	{Siegel}, C.~L. (1929).
	\newblock {\"Uber einige Anwendungen diophantischer Approximationen.}
	\newblock {\em {Abh. Preu{\ss}. Akad. Wiss., Phys.-Math. Kl.}}, 1929(1):70 s.
	
\end{thebibliography}
\end{document}